\newcommand*{\mailto}[1]{\href{mailto:#1}{\nolinkurl{#1}}}
\newtheorem{theorem}{Theorem}[section]
\newtheorem{lemma}[theorem]{Lemma}
\newtheorem{remark}[theorem]{Remark}
\theoremstyle{definition}
\newcommand{\be}{\begin{equation}}
\newcommand{\ee}{\end{equation}}
\newcommand{\beq}{\begin{equation}}
\newcommand{\eeq}{\end{equation}}
\newcommand{\bea}{\begin{eqnarray}}
\newcommand{\eea}{\end{eqnarray}}
\newcommand{\noprint}[1]{}
\newcommand{\dlmf}[1]{%
\cite[%
  \def\nextitem{\def\nextitem{, }}%
  \@for \el:=#1\do{\nextitem\href{http://dlmf.nist.gov/\el}{(\el)}}%
]{dlmf}%
}
\numberwithin{equation}{section}
\begin{document}

\title [The subspaces inclination]{Inclination of subspaces and decomposition of electromagnetic fields into potential and vortex components}


\author[M. Goncharenko]{ Maria Goncharenko}
\address{B. Verkin Institute for Low Temperature Physics and Engineering\\ 47, Nauky ave\\ 61103 Kharkiv\\ Ukraine}

\author[E. Khruslov]{ Evgen Khruslov}
\email{\href{mailto:khruslov@ilt.kharkov.ua}{khruslov@ilt.kharkov.ua}}

\keywords{ Hilbert space,\  inclination of subspaces,\   extension of functionals,\ decomposition of electromagnetic field}
\subjclass[2020]{ Primary 46B20; Secondary 46C15}

\begin{abstract} Using the notion of inclination of two subspaces $L$ and $M$ of Hilbert space $\mathcal{H}$, we prove the theorem on the extension of linear continuous functionals defined on the subspace $L$ to $\mathcal{H}$ so that the extended functionals vanish on the subspace $M$. We apply this theorem to study the question of decomposition of the electromagnetic field in resonator with ideally conducting boundary into potential and vortex components and derive the Korn-type inequality for vortex fields. \end{abstract}

\maketitle

\section*{Introduction}
\setcounter{section}{0}

The present study is motivated by the following question. Let $L$ and $M$ be two linear subspaces of the Hilbert space $\mathcal H$. What conditions must these subspaces satisfy so that any vector $x\in \mathcal H$ can be represented as a sum
\be\label{1} x=x^L+x^M, \quad \mbox{where} \ x^L\in L,\ x^M\in M,\ \ee
  and the inequality is fulfilled
\be\label{2}
\|x^L\|\leq A_1 \|x\|,\quad \|x^M\|\leq A_2  \|x\|,\ee
with constants $A_1$, $A_2$  not depended on $x$?

This is an abstract statement of a problem appearing in different  natural sciences.  In particular, in electrodynamics it is related to decomposition of an electromagnetic field in the domain with perfectly conducting boundary into the vortex and the potential components (\cite{Khruslov}). Note that for $L\cap M = \{ 0\}$ the decomposition \eqref{1} is a single-valued.

If the sum of subspaces $L$ and $M$ coincides with $\mathcal H$, equality \eqref{1} is evidently true,  but inequalities \eqref{2} is not obvious, and it requires additional information about $L$ and $M$  and estimates for constants $A_1$ and $A_2$.

That is why the question posed above involves, in the first place,  research of the closure of the sum $L+M$ of subspaces $L$ and $M$, and verification that its orthogonal complement is trivial: $(L+M)^\perp=0$.

The question of the closure of the sum of subspaces was studied earlier in a number of papers (\cite{Gurarii2} -- \cite{Kato}, \cite{Deutsch} -- \cite{Feshchenko}). In the papers \cite{Gurarii2} -- \cite{Kato} this question was studied for Banach spaces in a very general settings. The notion of inclination $\gamma_B(L,M) $ of two spaces $L$ and $M$ of a Banach space $B$ was introduced, and necessary and sufficient conditions for the closure of the sum of these subspaces were formulated in its terms. In the case of Hilbert spaces $\mathcal H=B$, the corresponding inclination $\gamma_{\mathcal H}(L,M) $ of the subspaces $L\subset \mathcal H$ and $M\subset \mathcal H$ is expressed in terms of the cosine of the angle $\varphi (L,M)$ between these subspaces by the formula $\gamma_{\mathcal H} (L,M) = \sqrt{1-\cos^2\varphi(L,M)}$. The definition of the angle $\varphi (L,M)$ between two subspaces of the Hilbert space was first given by K. Fridrichs in order to study the problem of the characteristic values of functions \cite{Fridrichs}. Subsequently, the notions of inclination and angle between subspaces were further developed and successfully applied in a number of branches of mathematics: the theory of bases, the theory of approximation and splines, operator theory (\cite{Gurarii2} -- \cite{Krein}).

In this paper, we use the definition of the inclination of subspaces $L, M \subset \mathcal H$, which takes into account the Hilbert structure of the space $\mathcal H$, that is, equivalent to the definition given, for example, in \cite{Deutsch}. In the first section, the notion of inclination $c(L,M)$ of subspaces $L$ and $M\subset \mathcal H$ is used to describe conditions on these subspaces under which representation \eqref{1} with estimates \eqref{2} holds. The main result is formulated in Theorem 1.1.

In the second section, the notion of inclination of subspaces is applied to the study of the possibility of extending linear continuous functionals $f\in L^*$ given on a subspace $L$ and vanishing on $L\cap M$ to functionals $\tilde f\in \mathcal H^*$ that vanish on a subspace $M\in\mathcal H$. It is proved that it is necessary and sufficient that the inclination of subspaces $L$ and $M$ be less than 1 (Theorem 1.2). As a corollary of this theorem, it is shown that conditions 1.2, formulated in Theorem 1.1 as sufficient for the validity of decomposition \eqref{1} -- \eqref{2}, are also necessary.

We note that the results obtained in Sections 1 and 2 are proved in this paper by quite elementary methods, although they may also be obtained as corollaries of profound results of previous papers (for example \cite{Deutsch}, \cite{Bottcher}, \cite{Ljance}). In the third Section, the results obtained in Sections 1, 2 are applied to the study of the decomposition of the electric component of the electromagnetic field in a domain with a perfectly conducting boundary into potential and vortex components. The inequality of Korn's type is derived for the vortex component of the field.

\section{The inclination of two subspaces of Hilbert space and decomposition of vectors into components from these subspaces}

Denote by $\mathcal H$ the Hilbert space (complex) with the scalar product $(u,v)$ and the norm $\Vert u\Vert = (u,u)^{1/2}$, $u, v\in\mathcal H$. Let $L$ and $M$ be two linear subspaces in it, $Q$ be their intersection (the case $Q=\{ 0\}$ is not excluded), and $L\ominus Q$ and $M\ominus Q$ be orthogonal complements to $Q$ in $L$ and $M$, respectively. We will assume that subspaces $L\ominus Q$ and $M\ominus Q$ are non-trivial. Taking into account the Hilbert structure of the space $\mathcal H$, we introduce the inclination of subspaces $L$ and $M$ by the formula
\be
\label{1.1}
c(L,M)=\underset{{{0\not= u\in L\ominus Q}\atop{0\not= v\in M\ominus Q}}}{\mathrm{supp}} \frac{\vert(u,v)\vert}{\Vert u\Vert\Vert v\Vert}.
\ee
Obviously $0\le c(L,M)\le 1$ and if $M\perp L$ or $(M\ominus Q)\perp (L\ominus Q)$, then $c(L,M)=0$. This is the reason for the name "inclination", since for $(M\ominus Q)\perp (L\ominus Q)$ we can say that the subspaces $L$ and $M$ are not inclined to each other. We exclude the cases $M\subseteq L$ and $L\subseteq M$ from consideration, since in these cases $c(L,M)$ is not defined by formula \eqref{1.1}, although it is natural to assume $c(L,M)=1$ in these cases. The definition of the inclination \eqref{1.1} actually coincides with the definition given in \cite{Deutsch} and expressed in terms of the angle $\varphi(L,M)$ between the subspaces $L$ and $M$ by the formula $c(L,M) =\cos\varphi(L,M)$. We will denote by $L+M=\{ x\in \mathcal H: x=x^L+x^M, x^L\in L, x^M\in M\}$ the sum of the subspaces $L$ and $M$ in $\mathcal H$.

\begin{lemma}
\label{lemma1.1}
If $c(L,M)<1$, then $L+M$ is a closed set in $\mathcal H$, and, hence, $L+M$ is a closed linear subspace of $\mathcal H$.
\end{lemma}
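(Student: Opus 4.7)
The plan is to extract a single quantitative consequence of the hypothesis $c(L,M)<1$ and then use it to show that any convergent sequence in $L+M$ admits Cauchy component sequences after reorganizing by the orthogonal splittings $L=(L\ominus Q)\oplus Q$ and $M=(M\ominus Q)\oplus Q$.

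Setting $c:=c(L,M)<1$, the definition of inclination gives $|(u,v)|\le c\,\|u\|\,\|v\|$ for all $u\in L\ominus Q$ and $v\in M\ominus Q$. I would expand $\|u+v\|^2=\|u\|^2+2\,\re(u,v)+\|v\|^2$ and apply $2\|u\|\|v\|\le\|u\|^2+\|v\|^2$ to obtain the fundamental lower bound
\[
\|u+v\|^2 \ \ge\ (1-c)\bigl(\|u\|^2+\|v\|^2\bigr),\qquad u\in L\ominus Q,\ v\in M\ominus Q. \quad (\ast)
\]
This is the only non-routine step; once it is in hand, the rest is bookkeeping.

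Given a sequence $x_n=y_n+z_n\in L+M$ with $y_n\in L$, $z_n\in M$ and $x_n\to x$ in $\mathcal H$, I would decompose $y_n=u_n+p_n$ and $z_n=v_n+r_n$ with $u_n\in L\ominus Q$, $v_n\in M\ominus Q$, and $p_n,r_n\in Q$, and set $q_n:=p_n+r_n\in Q$, so that $x_n=u_n+v_n+q_n$. Since $Q\subset L$ forces $u_n\perp q_n$ and $Q\subset M$ forces $v_n\perp q_n$, the vector $q_n$ is orthogonal to $u_n+v_n$, hence
\[
\|x_n-x_m\|^2 \ =\ \|(u_n-u_m)+(v_n-v_m)\|^2+\|q_n-q_m\|^2.
\]
Combining this with $(\ast)$ applied to $u_n-u_m\in L\ominus Q$ and $v_n-v_m\in M\ominus Q$ yields
\[
(1-c)\bigl(\|u_n-u_m\|^2+\|v_n-v_m\|^2\bigr)+\|q_n-q_m\|^2\ \le\ \|x_n-x_m\|^2.
\]

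Since the right-hand side tends to zero, the sequences $\{u_n\}$, $\{v_n\}$, $\{q_n\}$ are Cauchy in the closed subspaces $L\ominus Q$, $M\ominus Q$, $Q$, and converge to limits $u,v,q$ respectively; then $x=u+(v+q)\in L+M$, proving closedness. I expect the main conceptual hurdle to be the handling of $Q$: the original splitting $x_n=y_n+z_n$ is in general not unique and its summands are not controlled by $\|x_n\|$, and it is precisely the removal of the overlap component in $Q$ that allows the inclination estimate $(\ast)$ to control the remaining pieces in $L\ominus Q$ and $M\ominus Q$.
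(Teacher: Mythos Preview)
Your proof is correct and follows essentially the same route as the paper's: both decompose $x_n$ into its $Q$, $L\ominus Q$, and $M\ominus Q$ components, use the inclination bound to control the cross term, and deduce that the component sequences are Cauchy. The only difference is cosmetic: the paper introduces a parameter $\varepsilon$ in the estimate $|(u,v)|\le \tfrac{c}{2}(\varepsilon\|u\|^2+\varepsilon^{-1}\|v\|^2)$ and optimizes to obtain the sharper constant $1-c^2$ in place of your $1-c$, which matters later for the explicit bounds in Theorems~1.1 and~2.1 but is irrelevant for the closedness statement itself.
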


The statement of the Lemma follows from Theorem 1.3 in \cite{Deutsch} (see also \cite{Bottcher}, p. 1424, Example 3.2). But for convenience, we present it here by a simpler elementary method, which we will use in what follows.

\begin{proof}[Proof of Lemma \ref{lemma1.1}.]

Let $\{x_n\in L+M, n=1,2,...\}$ be a convergent sequence of vectors $x_n$ and
\be
\label{1.2}
x_n\to x\in L+M\subset {\mathcal H}\quad {\text{as}}\,\,\,n\to\infty.
\ee
Since $L+M=Q+(L\ominus Q)+(M\ominus Q)$, each vector $x_n\in L+M$ can be represented in a unique way in the form
\be
\label{1.3}
x_n=\hat y_n+\hat x_n^L+ \hat x_n^M.
\ee
Here $\hat y_n$ is the orthogonal projection $x_n$ onto the subspace $Q=L\cap M$, $\hat x_n^L\in L\ominus Q$, $\hat x_n^M \in M\ominus Q$. Therefore
\be
\label{1.4}
\Vert x_n\Vert^2=\Vert \hat y_n\Vert^2+\Vert x_n^L\Vert^2+\Vert x_n^M\Vert^2+2{\mathrm{Re}}(\hat x_n^L, \hat x_n^M).
\ee
Taking into account \eqref{1.1} we write the inequality
$$
\vert (\hat x_n^L, \hat x_n^M)\vert\le c(L,M)\Vert\hat x_n^L\Vert\Vert \hat x_n^M\Vert\le \frac{c(L,M)}{2} (\varepsilon\Vert \hat x_n^L\Vert^2+\varepsilon^{-1} \Vert \hat x_n^M\Vert^2),
$$
where $\varepsilon$ is any positive number. Then from \eqref{1.4} it follows that
\begin{multline*}
\Vert x_n\Vert^2 \ge \Vert \hat y_n\Vert^2+\Vert \hat x_n^L\Vert^2+\Vert \hat x_n^M\Vert^2-2\vert(\hat x_n^L, \hat x_n^M)\vert\\
\ge \Vert \hat y_n\Vert^2+\Vert \hat x_n^L\Vert^2 (1-\varepsilon c(L,M))+\Vert \hat x_n^M\Vert^2(1-\varepsilon^{-1} c(L,M)).
\end{multline*}
Hence, assuming that $0<c(L,M)<1$ and setting $\varepsilon =c(L,M)$ or $\varepsilon =c^{-1}(L,M)$, we obtain the inequalities
\be
\label{1.5}
\Vert \hat y_n\Vert\le \Vert x_n\Vert, \quad \Vert \hat x_n^L\Vert \le \frac{\Vert x_n\Vert}{\sqrt{1-c^2(L,M)}}, \quad \Vert \hat x_n^M\Vert \le \frac{\Vert x_n\Vert}{\sqrt{1-c^2(L,M)}}.
\ee
For $c(l,M)=0$ these inequalities are obvious, since the vectors $\hat y_n$, $\hat x_n^L$, $\hat x_n^M$ are mutually orthogonal. In a similar way, we estimate the differences
$$
\Vert \hat y_n -\hat y_m\Vert\le \Vert x_n - x_m\Vert,\quad \Vert \hat x_n^L- \hat x_m^L\Vert \le \frac{\Vert x_n -x_m\Vert}{\sqrt{1-c^2(L,M)}}, \quad \Vert \hat x_n^M- \hat x_m^M\Vert \le \frac{\Vert x_n -x_m\Vert}{\sqrt{1-c^2(L,M)}}.
$$
Since $\Vert x_n -x_m\Vert \to 0$ as $n,m\to\infty$, from these inequalities it follows that the sequences $\{\hat y_n\}_{n=1}^\infty$, $\{\hat x_n^L\}_{n=1}^\infty$, $\{\hat x_n^M\}_{n=1}^\infty$ are fundamental in the spaces $Q$, $L\ominus Q$, $M\ominus Q$, respectively. Since these spaces are complete, then
$$
\hat y_n \to \hat y\in Q, \quad \hat x_n^L\to \hat x^L\in L\ominus Q,\quad \hat x_n^M\to \hat x^M\in M\ominus Q,
$$
as $n\to\infty$ and, therefore,
$$
x_n\to \hat y +\hat x^L+\hat x^M\in L+M.
$$
By recalling \eqref{1.2}, we get $x=\hat y+\hat x^L+\hat y^M\in L+M$ for any $x\in\overline{L+M}$, and hence $L+M=\overline{L+M}$. The proof is complete.

\end{proof}

Let us now formulate the main result of this Section.
\setcounter{theorem}{0}
\begin{theorem}
\label{theorem1.1}

Let $L$ and $M$ be subspaces in $\mathcal{H}$ satisfying the conditions

1. $c=c(L,M)<1$;

2. $(L+M)^\perp =\{0\}$,
where $(L+M)^\perp$ is the orthogonal complement of $L+M$ in $\mathcal{H}$.

Then any vector $x\in\mathcal{H}$ can be represented as in \eqref{1} so that the inequalities \eqref{2} hold with constants
$$
A_1=a_1+\frac{1}{\sqrt{1-c^2}}, \quad A_2=a_2+\frac{1}{\sqrt{1-c^2}},
$$
where $0\le a_k\le 1$ ($k=1,2$) and $a_k=0$ if $L\cap M=\{0\}$.
\end{theorem}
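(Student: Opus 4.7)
The plan is to reduce the theorem to the estimates that were already extracted in the course of proving Lemma \ref{lemma1.1}. First, I would establish existence of the decomposition. Lemma \ref{lemma1.1} and condition 1 give that $L+M$ is a closed subspace of $\mathcal{H}$, and condition 2 then forces
$$\mathcal{H}=\overline{L+M}\oplus (L+M)^\perp=L+M.$$
So every $x\in\mathcal{H}$ lies in $L+M$ and the existence assertion of \eqref{1} is for free; the actual content of the theorem is the norm control in \eqref{2}.

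For the bounds, I would reuse the canonical decomposition from the proof of Lemma \ref{lemma1.1}. Since $L+M=Q+(L\ominus Q)+(M\ominus Q)$ and these three summands are pairwise independent (the first is orthogonal to the other two), each $x\in\mathcal{H}$ has a unique representation
$$x=\hat y+\hat x^L+\hat x^M,\qquad \hat y\in Q,\ \hat x^L\in L\ominus Q,\ \hat x^M\in M\ominus Q.$$
The estimates \eqref{1.5}, which were derived for vectors of $L+M$ in the lemma but now apply to arbitrary $x\in\mathcal{H}$, give
$$\|\hat y\|\le\|x\|,\qquad \|\hat x^L\|\le \frac{\|x\|}{\sqrt{1-c^2}},\qquad \|\hat x^M\|\le \frac{\|x\|}{\sqrt{1-c^2}}.$$

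To assemble $x^L\in L$ and $x^M\in M$, I would split the $Q$-part between them: choose any $\alpha_1,\alpha_2\ge 0$ with $\alpha_1+\alpha_2=1$ and set
$$x^L=\alpha_1\hat y+\hat x^L\in L,\qquad x^M=\alpha_2\hat y+\hat x^M\in M.$$
Applying the triangle inequality together with the bounds above yields
$$\|x^L\|\le \Bigl(\alpha_1+\frac{1}{\sqrt{1-c^2}}\Bigr)\|x\|,\qquad \|x^M\|\le \Bigl(\alpha_2+\frac{1}{\sqrt{1-c^2}}\Bigr)\|x\|,$$
which is exactly \eqref{2} with $a_k=\alpha_k\in[0,1]$. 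If $L\cap M=\{0\}$, then $Q=\{0\}$, $\hat y=0$, and both $a_k$ can be taken to be $0$, recovering the last clause of the theorem.

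In short, the only genuinely substantive step is the uniform control of the oblique projections onto $L\ominus Q$ and $M\ominus Q$, and that is precisely \eqref{1.5}; no independent obstacle appears. The one subtlety worth flagging is the role of condition 2: without it, Lemma \ref{lemma1.1} only produces a closed proper subspace $L+M\subsetneq\mathcal{H}$, so condition 2 cannot be omitted even though it enters the argument in a single line.
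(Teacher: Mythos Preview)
Your proof is correct and follows essentially the same route as the paper: invoke Lemma \ref{lemma1.1} with condition~2 to obtain $L+M=\mathcal{H}$, use the canonical decomposition $x=\hat y+\hat x^L+\hat x^M$ together with the bounds \eqref{1.5}, and then split $\hat y$ convexly between $x^L$ and $x^M$. The only cosmetic difference is that the paper restates \eqref{1.5} as \eqref{1.7} for a general $x\in\mathcal{H}$ rather than citing it directly.
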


\begin{proof}[Proof of Theorem \ref{theorem1.1}.]

By virtue of condition 1, according to Lemma \ref{lemma1.1} $L+M =\overline{L+M}$, that is $L+M$ is a closed linear subspace in $\mathcal{H}$, and according to condition 2, any vector from $\mathcal{H}$ orthogonal to $L+M$ is zero. Therefore, $L+M=\mathcal{H}$ and any vector $x\in\mathcal{H}$ can be represented in a form similar to \eqref{1.3}
\be
\label{1.6}
x=\hat y +\hat x^L+\hat x^M,
\ee
where $\hat y$ is the orthogonal projection of $x$ onto the subspace $Q=L\cap M$, $\hat x^L\in L\ominus Q$, $\hat x^M\in M\ominus Q$. Hence, similarly to \eqref{1.5}, we obtain the inequalities
\be
\label{1.7}
\Vert \hat y\Vert\le \Vert x\Vert, \quad \Vert \hat x^L\Vert\le \frac{\Vert x\Vert}{\sqrt{1-c^2}},\quad \Vert \hat x^M\Vert\le \frac{\Vert x\Vert}{\sqrt{1-c^2}}.
\ee
Now we represent the equality \eqref{1.6} as \eqref{1}
$$
x=x^L+x^M,
$$
where $x^L=\hat x^L +\hat a_1\hat y\in L$, $x^M=\hat x^M +\hat a_2\hat y\in M$, $\hat a_1$, $\hat a_2$ are arbitrary non-negative numbers such that $\hat a_1+\hat a_2=1$.

Taking into account \eqref{1.7} we get
$$
\Vert x^L\Vert\le \hat a_1 \Vert\hat y\Vert + \frac{1}{\sqrt{1-c^2}}\Vert x\Vert\le A_1 \Vert x\Vert,
$$
$$
\Vert x^M\Vert\le \hat a_2 \Vert\hat y\Vert + \frac{1}{\sqrt{1-c^2}}\Vert x\Vert\le A_2 \Vert x\Vert.
$$
Therefore, $\forall x\in \mathcal{H}$ can be represented in the form \eqref{1} with estimates \eqref{2} where
$$
A_k= a_k+\frac{1}{\sqrt{1-c^2}} \quad (k=1,2),
$$
$0\le a_k\le 1$, and $a_k=0$, if $\hat y =0$, and, in particular, if $L\cap M =\{0\}$.

\end{proof}

\setcounter{theorem}{0}

\begin{remark}
 Theorem 1.1 establishes the sufficiency of conditions 1, 2, which must be satisfied by subspaces $L, M\subset \mathcal{H}$ in order to realize $\forall x\in \mathcal{H}$ decomposition \eqref{1} with estimates \eqref{2}. It turns out that these conditions are also necessary. The necessity of condition 2 is evident, and the necessity of condition 1 follows from Theorem \ref{theorem2.1} of the next Section. Then Theorem \ref{theorem1.1} implies the necessity of condition 1 for the closedness of the sum $L+M$ (see Lemma \ref{lemma1.1}).
 \end{remark}
 
 \section{On the extension of linear continuous functionals}

Consider the set $F_Q\subset L^*$ of linear continuous functionals given on the subspace $L\subset \mathcal{H}$ and vanishing on the subspace $Q=L\cap M$ in $L$, i.e.
$$
F_Q =\{ f\in L^*: f(x)=0\,\,{\mathrm{as}}\,\,x\in Q\}.
$$

The following question is posed: is it possible to extend $f\in F_Q$ to a linear continuous functional $\tilde f\subset F_M \subset \mathcal{H}^*$, where $F_M =\{ \tilde f \in \mathcal{H}^*:\,\tilde f(x)=f(x)\,\,\mathrm{as}\,\,x\in L, \, f(x)=0\,\,\mathrm{as}\,\,x\in M,\, \Vert \tilde f\Vert_{\mathcal{H}^*}\le A\Vert f\Vert_L,\, A=A(N,M)<\infty\,\,\text{does not depend on}\,\, f\in F_Q\}$?

The answer is given by the following theorem.

\begin{theorem}
\label{theorem2.1}
In order to have an extension $\tilde f \in F_M \subset \mathcal{H}^*$ for the functional $f\in L^*$ (i.e., a mapping $F_Q$ to $F_M$) it is necessary and sufficient that the inclination $c(L,M)$ of the subspaces $L$ and $M$ satisfies the inequality $c(L,M) <1$. Moreover, if $\tilde f \in F_M$ is an extension of $f\in F_Q$, then
\be
\label{2.1}
\Vert \tilde f\Vert_{\mathcal{H}^*}\le \frac{1}{\sqrt{1-c^2(L,M)}}\Vert f\Vert_{L^*}.
\ee

\end{theorem}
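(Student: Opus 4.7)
The theorem splits into a sufficiency direction (build an extension when $c:=c(L,M)<1$) and a necessity direction (show that the existence of the mapping $F_Q\to F_M$ forces $c<1$). For sufficiency, my plan is to build the extension first on the closed subspace $L+M$ (which is closed by Lemma~\ref{lemma1.1}) and then promote it to $\mathcal{H}$ via Riesz representation. Every $x\in L+M$ has a unique decomposition $x=\hat{y}+\hat{x}^L+\hat{x}^M$ with $\hat{y}\in Q$, $\hat{x}^L\in L\ominus Q$, $\hat{x}^M\in M\ominus Q$, so I would define $\tilde{f}(x):=f(\hat{x}^L)$ on $L+M$. Since $f$ vanishes on $Q$, this agrees with $f(x)$ when $x\in L$ (in which case uniqueness of decomposition gives $\hat{x}^M=0$ and $x=\hat{y}+\hat{x}^L$) and vanishes when $x\in M$ (then $\hat{x}^L=0$). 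The bound $\Vert\hat{x}^L\Vert\le\Vert x\Vert/\sqrt{1-c^2}$ from inequality \eqref{1.5} in the proof of Lemma~\ref{lemma1.1} yields $|\tilde{f}(x)|\le\Vert f\Vert_{L^*}\Vert x\Vert/\sqrt{1-c^2}$ on $L+M$. To extend $\tilde{f}$ to $\mathcal{H}$, I would apply Riesz representation inside the Hilbert space $L+M$ to obtain $w\in L+M$ with $\tilde{f}(u)=(u,w)$ and $\Vert w\Vert\le\Vert f\Vert_{L^*}/\sqrt{1-c^2}$; because $\tilde{f}$ vanishes on $M\subset L+M$, automatically $w\perp M$, so the functional $x\mapsto(x,w)$ on all of $\mathcal{H}$ lies in $F_M$ and satisfies \eqref{2.1}.

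For necessity, suppose the extension mapping $F_Q\to F_M$ exists with uniform constant $A$, and assume for contradiction that $c(L,M)=1$. Then there are unit vectors $u_n\in L\ominus Q$, $v_n\in M\ominus Q$ with $\alpha_n:=(v_n,u_n)$, $|\alpha_n|\to 1$. The functional $f_n(u):=(u,u_n)$ lies in $F_Q$ (since $u_n\perp Q$) with $\Vert f_n\Vert_{L^*}=1$, and any extension of $f_n$ that vanishes on $M$ has the Riesz form $\tilde{f}_n(x)=(x,w_n)$ with $w_n\perp M$; the condition $\tilde{f}_n|_L=f_n$ forces $w_n-u_n\perp L$, i.e.\ $w_n=u_n+w'_n$ with $w'_n\perp L$. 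Decomposing $v_n=\alpha_n u_n+v_n^\perp$ with $v_n^\perp\perp u_n$ and $\Vert v_n^\perp\Vert^2=1-|\alpha_n|^2$, and expanding $(w_n,v_n)=0$ using the orthogonalities $w'_n\perp u_n$ and $v_n^\perp\perp u_n$, one arrives at $(w'_n,v_n^\perp)=-\overline{\alpha_n}$. Cauchy--Schwarz then yields $\Vert w'_n\Vert\ge |\alpha_n|/\sqrt{1-|\alpha_n|^2}$, whence $\Vert w_n\Vert^2=1+\Vert w'_n\Vert^2\ge 1/(1-|\alpha_n|^2)\to\infty$, contradicting the uniform bound $\Vert\tilde{f}_n\Vert_{\mathcal{H}^*}\le A$.

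I expect the main obstacle to be precisely this geometric computation in the necessity direction: extracting the sharp lower bound $\Vert w_n\Vert\ge 1/\sqrt{1-|\alpha_n|^2}$ from just the two orthogonality constraints $w_n\perp M$ and $w_n-u_n\perp L$. A pleasant by-product is that running the same computation on supremizing (rather than limiting) sequences for $c(L,M)$ shows that the constant $1/\sqrt{1-c^2(L,M)}$ in \eqref{2.1} is sharp, complementing the sufficiency half.
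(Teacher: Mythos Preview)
Your sufficiency argument is essentially identical to the paper's: define $\hat f(x)=f(\hat x^L)$ on $L+M$ using the unique decomposition $x=\hat y+\hat x^L+\hat x^M$, bound it via $\Vert\hat x^L\Vert\le\Vert x\Vert/\sqrt{1-c^2}$, and extend to $\mathcal H$ by Riesz with the same norm.

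Your necessity argument is correct but takes a different route. The paper, after arranging $(u_n,v_n)$ real and tending to $1$, simply evaluates the extended functional on the \emph{test vector} $w_n:=u_n-v_n$: since $u_n\in L$ and $v_n\in M$ one has $\tilde f_n(w_n)=f_n(u_n)-0=1$, while $\Vert w_n\Vert^2=2-2(u_n,v_n)\to 0$, forcing $\Vert\tilde f_n\Vert_{\mathcal H^*}\ge 1/\Vert w_n\Vert\to\infty$. This is shorter than your approach, which instead passes to the Riesz representative $w_n$ of $\tilde f_n$, splits $w_n=u_n+w_n'$ with $w_n'\perp L$, and uses the constraint $(w_n,v_n)=0$ together with Cauchy--Schwarz to get $\Vert w_n\Vert\ge 1/\sqrt{1-|\alpha_n|^2}$. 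What your computation buys, as you note, is the sharp lower bound: applied to supremizing (rather than limiting) sequences it shows that no uniform constant smaller than $1/\sqrt{1-c^2}$ can work, so \eqref{2.1} is optimal. The paper's test-vector estimate only yields $\Vert\tilde f_n\Vert\ge 1/\sqrt{2-2(u_n,v_n)}$, which along supremizing sequences gives the weaker bound $1/\sqrt{2-2c}$ and hence does not by itself establish sharpness.
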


\begin{proof}

Let $c(L,M)<1$. Then from Lemma \ref{lemma1.1} it follows that the sum of the subspaces $L$ and $M$ is a closed linear subspace of $\mathcal{H}$. For a given functional $f\in F_Q\subset L^*$, we define an extension $\hat f \in F_M\subset (L+M)^*$ by
$$
\hat f(x) =\begin{cases} f(x), & x\in L\\ 0, & x\in M\end{cases}
$$
i.e. for $x\in L+M$, $x=x^L+x^M=\hat y +\hat x^L +\hat x^M$ ($\hat y \in Q=L\cap M$, $\hat x^L \in L\ominus Q$, $\hat x^M \in M\ominus Q$) we assume that $\hat f(x) = f(\hat x^L)$. This definition is correct because $f(x)=0$ as $x\in Q$.

Let us estimate the norm of this functional in $L+M$. Using the inequality $\Vert x\Vert^2 \ge (1-c^2 (L,M))\Vert \hat x^L\Vert^2$ (see \eqref{1.7}), we get
\begin{multline}
\label{2.2}
\Vert \hat f\Vert_{(L+M)^*}=\underset{0\not= x\in L+M}{\mathrm{sup}} \frac{\vert \hat f(x)\vert}{\Vert x\Vert}=\underset{0\not= x^L\in L+M}{\mathrm{sup}} \frac{\vert  f(x^L)\vert}{\Vert x^L\Vert}\le \frac{1}{\sqrt{1-c^2}} \underset{x^L\in L\ominus Q}{\mathrm{sup}} \frac{\vert  f(x^L)\vert}{\Vert x^L\Vert}\\
\le \frac{1}{\sqrt{1-c^2}} \underset{0\not= x\in L}{\mathrm{sup}} \frac{\vert  f(x)\vert}{\Vert x\Vert}\le \frac{1}{\sqrt{1-c^2}} \Vert f\Vert_{L^*}.
\end{multline}
Now we extend the functional $\hat f\in (L+M)^*$ to the whole space $\mathcal{H}$ so that $\tilde f(x)=\hat f(x)$ as $x\in L+M$ and
\be
\label{2.3}
\Vert \tilde f\Vert_{\mathcal{H}^*}=\Vert \hat f\Vert_{(L+M)^*}.
\ee
For the Hilbert spaces $\mathcal{H}$ and $L+M\subseteq \mathcal{H}$, the possibility of such an extension follows from the Riesz theorem on the general form of the linear functional \cite{Dunford}.

Inequality \eqref{2.1} follows from \eqref{2.2} and \eqref{2.3}. Thus we have proved the sufficiency of the condition $c(L,M)<1$ for the mapping $F_Q\to F_M$. 

To verify its necessity, we use the method of proof by contradiction. Let us suppose that $c(L,M)=1$ and there exists a mapping $F_Q$ onto $F_M$ such that $f\to\tilde f$, $\Vert\tilde f\Vert_{\mathcal{H}^*}\le A\Vert f\Vert_{L^*}$, where $A$ does not depend on $f\in F_Q$. Since $c(L,M)=1$, from \eqref{1.1} it follows that there exist subsequences $\{u_n^\prime\}_{n=1}^\infty$ and $\{ v_n^\prime\}_{n=1}^\infty$ such that $u_n^\prime \in L\ominus Q$, $v_n^\prime \in M\ominus Q$, $\Vert u_n^\prime\Vert = \Vert v_n^\prime\Vert =1$, and $\vert( u_n^\prime, v_n^\prime)\vert\to 1$ as $n\to\infty$. We denote by $\varphi_n^\prime =\arg(u_n^\prime, v_n^\prime)$, i.e. $(u_n^\prime, v_n^\prime)=\vert (u_n^\prime, v_n^\prime)\vert e^{i\varphi_n^\prime}$ and set $u_n=u_n^\prime e^{-i\frac{\varphi_n^\prime}{2}}$, $v_n=v_n^\prime e^{i\frac{\varphi_n^\prime}{2}}$. Then $(u_n, v_n)= e^{-i\varphi_n^\prime}(u_n^\prime, v_n^\prime)=\vert (u_n^\prime, v_n^\prime)\vert$. Consequently there are sequences $u_n\in L\ominus Q$, $v_n \in M\ominus Q$ such that $\Vert u_n\Vert=\Vert v_n\Vert=1$, $\mathrm{Im}(u_n, v_n)=0$, $(u_n, v_n)\to 1$ as $n\to\infty$. Therefore
\be
\label{2.4}
\Vert u_n-v_n\Vert^2= \Vert u_n\Vert^2+\Vert v_n\Vert^2-2(u_n, v_n)\to 0 \,\,{\text{as}}\,\,n\to \infty.
\ee
  Let us introduce a sequence of functionals $\{f_n\in L^*\}$ assuming $f_n(x)=(x, u_n)$. Then $f_n(u_n)=1$ for $x\in L$, $\Vert f_n\Vert_{L^*}=1$ and $f_n(y)=0$ for $y\in Q=L\cap M$, i.e. $f_n\in F_Q$. By assumption, there are extensions $\tilde f_n \in F_M\subset \mathcal{H}^*$ of functionals $f_n$ to the whole space $\mathcal{H}$ such that $\tilde f_n (x)=f_n (x)$ for $x\in L$, $\tilde f_n (x)=0$ for $x\in M$ and
  \be
  \label{2.5}
  \Vert \tilde f_n \Vert_{\mathcal{H}^*}\le A\Vert f_n\Vert_{L^*}=A<\infty,
  \ee
  where constant $A$ does not depend on $n$.
  
Let us calculate the value of the functional $\tilde f_n\in F_M$ on the vector $w_n=u_n-v_n\in\mathcal{H}$. Since $u_n\in L\ominus Q$ and $v_n\in M\ominus Q$, $\tilde f_n (w_n)=\tilde f_n (u_n) -\tilde f_n (v_n)=1$. Therefore, taking into account \eqref{2.4}, we get
$$
\Vert \tilde f_n \Vert_{\mathcal{H}^*}= \underset{0\not= x\in \mathcal{H}}{\mathrm{sup}}\frac{\vert \tilde f_n(x)\vert}{\Vert x\Vert}\ge \frac{\vert \tilde f_n(w_n)\vert}{\Vert w_n\Vert}\to\infty,\quad n\to\infty.
$$
This contradicts \eqref{2.5} and, hence, $c(L,M)<1$.

\end{proof}

\setcounter{theorem}{0}
\begin{remark}
From Theorem \ref{theorem2.1} it follows that condition 1 in Theorem \ref{theorem1.1} is necessary. Indeed, if the decomposition \eqref{1} is valid for $\forall x\in\mathcal{H}$ with estimates \eqref{2}, i.e., $x=x^L+x^M$, $x^L\in L$, $x^M\in M$ and $\Vert x^L\Vert \le A_1\Vert x\Vert$, then any functional $f\in F_Q$ can be extended to a functional $\tilde f\in F_M$ by setting $\tilde f (x)=f(x^L)$. Then $\tilde f(x)=f(x)$ for $x\in L$, $f(x)=0$ for $x\in M$ and $\vert\tilde f(x)\vert = \vert f(x^L)\vert\le \Vert f\Vert_{L^*}\Vert x^L\Vert\le A_1 \Vert f\Vert_{L^*}\Vert x\Vert$ $\forall x\in \mathcal{H}$, i.e. $\Vert \tilde f\Vert_{\mathcal{H}^*} \le A_1\Vert f\Vert_{L^*}$ and, therefore, $\tilde f\in F_M$. Thus, Theorem \ref{theorem2.1} implies that $c(L,M)<1$.

\end{remark}

Let us give now the simplest example illustrating the notion of inclination and the results of Theorems \ref{theorem1.1} and \ref{theorem2.1}. Let $\mathcal{H}=l^2 (\mathbb{N})$, where the vectors $u=\{u_i\}_{i=1}^\infty$ from $l^2 (\mathbb{N})$ are real-valued (i.e. $u_i\in\mathbb{R}$) and the scalar product is defined by the formula
$$
(u,v)=\sum_{i=1}^\infty u_i v_i.
$$
Consider two subspaces in $\mathcal{H}=l^2 (\mathbb{N})$
\be
\label{2.6}
L=\{ v=(v_1,v_2,...)\in l^2 (\mathbb{N}): v_{2n-1}\in \mathbb{R}, v_{2n}=0\},
\ee
\begin{multline}
\label{2.7}
M=\{ w=(w_1,w_2,...)\in l^2 (\mathbb{N}): w_{2n-1}\in \mathbb{R}, w_{2n-1}=w_{2n}\theta_{2n}, \theta_{2n}\in \mathbb{R}, \\
0<\vert\theta_{2n}\vert <\infty\}.
\end{multline}
Vectors from $L$ and $M$ have the following structures $v=(v_1, 0, v_2, 0,...)$, $w=(w_1, w_1\theta_2, w_3, w_3\theta_4, ...)$. Thus, $Q=L\cap M=\{0\}$ and the inclination of these subspaces is defined by the equality
\be
\label{2.8}
c(L,M)= \underset{{{0\not= v\in L}\atop{0\not= w\in M}}}{\mathrm{sup}} \frac{\left \vert \sum\limits_{i=1}^\infty v_{2n-1}w_{2n-1}\right\vert}{\left ( \sum\limits_{n=1}^\infty v_{2n-1}^2\right )^{1/2}\left (\sum\limits_{n=1}^\infty w_{2n-1}^2 (1+\theta_{2n}^2)\right )^{1/2}} =\frac{1}{(1+\theta)^{1/2}},
\ee
where $\theta = \inf \theta_{2n}^2$.

Therefore $c(L,M) <1$ if $\theta >0$ and $c(L,M)=1$ if $\theta =0$.

For $\theta >0$ we have $L+M=\mathcal{H}$. Indeed, any vector $u=(u_1, u_2, ...) \in l^2 (\mathbb{N}$ can be represented in the form $u=v+w$, where $v_{2n-1}= u_{2n-1} -\frac{u_{2n}}{\theta_{2n}}$, $v_{2n}=0$, i.e. $v\in L$, and $w_{2n-1}=\frac{u_{2n}}{\theta_{2n}}$, $w_{2n}=u_{2n}$, i.e. $w\in M$.

Hence we obtain the following estimates for the vectors $v$ and $w$
$$
\Vert v\Vert \le \sqrt{\frac{\theta +1}{\theta}}\Vert u\Vert, \quad \Vert w\Vert \le \sqrt{\frac{\theta +1}{\theta}}\Vert u\Vert.
$$
Since, by virtue of \eqref{2.8},
\be
\label{2.9}
\frac{\theta+1}{\theta}=\frac{1}{1-c^2},
\ee
these estimates coincide with the estimates of the Theorem \ref{theorem1.1}.

Consider now the functional $f\in L^*$. By the Riesz theorem, there exists a vector $a\in L$, $a=(a_1,\theta, a_2, \theta,...)\in l^2 (\mathbb{N}$ such that 
\be
\label{2.10}
f(v)=(v,a)=\sum_{n=1}^\infty v_{2n-1} a_{2n-1}
\ee
for $v\in L$.

Extension $\tilde f\in F_M$ of the functional $f$ to the whole space $\mathcal{H}=l^2$ can be represented in the form 
\be
\label{2.11}
\tilde f(u)=(u,\tilde a)\quad \forall u\in\mathcal{H},
\ee
where $\tilde a= (\tilde a_1, \tilde a_2,...)\in l^2 (\mathbb{N}$, $\tilde a_{2n-1}= a_{2n-1}$, $\tilde a_{2n} = -a_{2n-1}\theta_{2n}^{-1}$, $n=1,2,...$

Indeed, according to \eqref{2.6}, \eqref{2.7}, \eqref{2.10}, \eqref{2.11} $\tilde f(v) = f(v)$ for $v\in L$; $\tilde f(w)=0$ for $w\in M$, and by virtue of \eqref{2.9}--\eqref{2.11} the estimate
$$
\Vert \tilde f\Vert_{\mathcal{H}^*}=\Vert \tilde a\Vert\le \sqrt{\frac{\theta +1}{\theta}}\Vert a\Vert =\frac{1}{\sqrt{1-c^2}}\Vert a\Vert=\frac{1}{\sqrt{1-c^2}} \Vert f\Vert_{L^*}
$$
holds and it coincides with the estimate of Theorem \ref{theorem2.1}.

\section{On decomposition of electric component of electromagnetic field in domain with perfectly conducting boundary into potential and vortex components}

Let $\Omega$ be bounded domain in $\mathbb{R}^3$ with smooth boundary $\partial\Omega =\Gamma$. The electric component of the electromagnetic field in a domain $\Omega$ with a perfectly conducting boundary $\Gamma$ is a vector field $u(x)\in L_2 (\Omega, \mathbb{C}^3)$ with a finite norm
\be
\label{3.1}
\Vert u\Vert_\Omega =\left \{ \int\limits_\Omega (\vert{\mathrm{rot}} u\vert^2 + \vert{\mathrm{div}} u\vert^2+ \vert u\vert^2)dx\right\}^{1/2}<\infty
\ee
that satisfies the following condition
\be
\label{3.2}
u_\tau (x)=0, \,\, x\in\Gamma.
\ee
Here and below we denote by $\vert\,\,\cdot\,\,\vert$ the norms of vectors from $\mathbb{C}^3$ (or $\mathbb{C}$), by $\langle \,\,\cdot\,\,\rangle$ the standard scalar product in $\mathbb{C}^3$ ($\mathbb{C}$), and by $u_\tau (x)$ and $u_\nu (x)$, respectively, the tangent and normal components of the field $u(x)$ on the surface $\Gamma$ at a point $x\in\Gamma$. For non-smooth vector functions satisfying condition \eqref{3.1}, these components are defined as elements of the space $H^{-1/2} (\Gamma)$ \cite{Duvant} and, therefore, the boundary condition \eqref{3.2} in the general case is understood in the generalized sense:
$$
u_\tau =0 \Leftrightarrow \int\limits_\Omega \langle u,\mathrm{rot} v\rangle dx =\int\limits_\Omega \langle \mathrm{rot}u, v\rangle dx, \,\,\,\forall v\in H^1 (\Omega, \mathbb{C}^3),
$$
or what is the same
$$
f_u (v)=\int\limits_\Gamma \langle v\wedge \nu, u \rangle d\Gamma =0\,\,\, \forall v\in H^{1/2} (\Gamma, \mathbb{C}^3),
$$
where $\nu = \nu(x)$ is the unit vector of the outer normal to the surface $\Gamma$ at the point $x\in\Gamma$, $\wedge$ is the vector product in $\mathbb{C}^3$, and since $u\in H^{-1/2}(\Gamma)$, the integral over $\Gamma$ is understood as a functional $f_u\in (H^{1/2}(\Gamma, \mathbb{C}^3))^*$. In paper \cite{Duvant}  was proved that the set of vector functions satisfying conditions \eqref{3.1}, \eqref{3.2} is a closed subspace $H^1_0 (\Omega, \mathbb{C}^3; \tau)$ of the Sobolev space $H^1 (\Omega, \mathbb{C}^3)$ of vector functions $v(x)$ satisfying the condition $v_\tau (x)=0$ on $\Gamma$. Hence, by virtue of the embedding theorem, $u_\tau \in H^{1/2}(\Gamma)$, $u_\nu \in H^{1/2}(\Gamma)$ and condition \eqref{3.2} can be understood in the usual sense.

Introducing in the subspace $H_0^1 (\Omega, \mathbb{C}^3; \tau)\subset H^1 (\Omega, \mathbb{C}^3)$ the scalar product $(\cdot ,\cdot)_\Omega$ compatible in the standard way $(u,u)^{1/2}_\Omega =\Vert u\Vert_\Omega$ with the norm \eqref{3.1}, we obtain a complete Hilbert space, which we denote by $\mathscr{H}$. Consider two linear subspaces in it
$$
\mathscr{L}=\{u\in\mathscr{H}:\, u(x)=0, \, x\in\Gamma\},
$$
$$
\mathscr{M}= \{ u\in \mathscr{H}: u=\nabla \varphi, \varphi \in H_0^1 (\Omega, \mathbb{C}), \Delta\varphi\in L_2 (\Omega)\}.
$$
It can be shown that these subspaces are closed in $\mathscr{H}$, and
$$
\mathscr{L} = \{u\in H_0^1 (\Omega, \mathbb{C}^3)\},
$$
$$
\mathscr{M}=\{u=\nabla \varphi, \varphi \in H_0^1 (\Omega, \mathbb{C})\cap H^2 (\Omega,\mathbb{C})\},
$$
where $H_0^1$, $H^2$ are the standard notation for Sobolev spaces $\stackrel{\circ}{W_2^1} =H_0^1$, $W_2^2=H^2$ (see, for example, \cite{Birman}). For this purpose, we use the well-known equality
$$
\Vert \nabla u\Vert_{L_2}^2=\Vert \mathrm{rot} u\Vert_{L_2}^2 + \Vert \mathrm{div} u\Vert_{L_2}^2, \,\, \forall u\in C_0^1(\Omega, \mathbb{C}^3),
$$
and inequality 
$$
\Vert\varphi\Vert_{H^2} \le C_2 (\varkappa) \Vert\Delta \varphi\Vert_{L_2}, \,\,\forall\varphi\in C_0^2 (\Omega),
$$
where $C_1 =\mathrm{const}$ ($\ge 1$), $C_2(\varkappa)$ is a constant depending on the curvature $\varkappa$ of the surface $\Gamma$ \cite{Ladyzhenskaya}.

Let us show now that for $\mathcal{H}=\mathscr{H}$, $L=\mathscr{L}$, $M=\mathscr{M}$ conditions 1 and 2 of Theorem 1.1 are satisfied, i.e. the inclination $c(\mathscr{L}, \mathscr{M})$ of subspaces $\mathscr{L}$ and $\mathscr{M}$ is less than $1$, and any vector $u\in\mathscr{H}$ orthogonal to the sum $\mathscr{L}+\mathscr{M}$ is zero. To verify that condition 1 of Theorem \ref{theorem1.1} is satisfied, we use Theorem \ref{theorem2.1}.

Let $f\in \mathscr{L}^*$ be a linear continuous functional defined on the space $\mathscr{L}$ and vanishing on $Q=\mathscr{L}\cap \mathscr{M}$, i.e. $f\in F_Q$. By virtue of the Riesz theorem, there exists a vector function $w\in \mathscr{L} =H_0^1(\Omega, \mathbb{C}^3)$ such that
\be
\label{3.3}
f(u)=(u,w)_\Omega =\int\limits_\Omega \left \{\langle \mathrm{rot}u, \mathrm{rot}w\rangle +\langle \mathrm{div}u, \mathrm{div}w\rangle +\langle u, w\rangle\right \} dx.
\ee
With the help of this equality, taking into account that $f(u)=0$ $\forall u\in Q=\mathscr{L}\cap \mathscr{M}=\{ u=\nabla\varphi: \varphi\in H_0^1 (\Omega), \nabla\varphi\in H_0^1(\Omega, \mathbb{C}^3)\}$, we conclude that $w(x)$ is a generalized solution of the following boundary value problem
 \be
 \label{3.4}
 \begin{cases}
 \mathrm{rot}\mathrm{rot}w(x)-\nabla \mathrm{div}w(x)+w(x)=j(x),\,\,x\in\Omega,\\
 w(x)=0,\,\,x\in\Gamma,
 \end{cases}
 \ee
 where $j(x)$ is a vector function from $H^{-1} (\Omega, \mathbb{C}^3)$ that satisfies the equation $\mathrm{div}j(x)=0$ in $\Omega$ in the sense of distributions:
$$
j(x)\in J(\Omega)=\{ j\in H^{-1} (\Omega, \mathbb{C}^3), \mathrm{div}j(x)=0\}.
$$
From this it follows that $w(x)$ satisfies the equation
\be
\label{3.5}
-\Delta \mathrm{div}w(x)+ \mathrm{div}w(x)=0,\,\,x\in\Omega,
\ee
and, thus, $\Vert \Delta \mathrm{div}w\Vert_{L_2(\Omega)}= \Vert \mathrm{div}w\Vert_{L_2(\Omega)}<\infty$.

Denote by $W(\Omega)$ the set of solutions of the boundary value problem \eqref{3.4} for all $j\in J(\Omega)$ and assume that
\be
\label{3.6}
S=\sup\limits_{W(\Omega)}\frac{\Vert \nabla\mathrm{div}w\Vert_{L_2(\Omega)}}{\Vert w\Vert_{H^1(\Omega)}}<\infty.
\ee
Such an inequality seems to be probable, since all $w(x)$ from $W(\Omega)$ belong to the space $H_0^1 (\Omega, \mathbb{C}^3)$ and satisfy equation \eqref{3.5} in the domain $\Omega$.

Let us show that, under this assumption, any functional $f\in F_Q\subset \mathscr{L}^*$  defined on the subspace $\mathscr{L}$ by formula \eqref{3.3} can be extended on the whole space $\mathscr{H}$  to a functional $\tilde f\in \mathscr{H}^*$  such that $\tilde f(u)=f(u)$ for $u\in \mathscr{L}$, $\tilde f(v)=0$ for $v\in \mathscr{M}$, and $\Vert \tilde f\Vert_{\mathscr{H}^*}\le \hat C(S) \Vert f\Vert_{\mathscr{L}^*}$, where $\hat C(S)$ does not depend on $f\in F_Q$.

We define the functional $\tilde f$ by formula
\be
\label{3.7}
\tilde f(u)= \int\limits_\Omega \left \{\langle \mathrm{rot}u, \mathrm{rot}w\rangle + \langle \mathrm{div}u, \mathrm{div}w\rangle + \langle u, w\rangle\right \} dx - \int\limits_\Gamma \langle u, \nu \mathrm{div}w\rangle d\Gamma, \,\,\forall u\in \mathscr{H},
\ee
where $w=w(x)$ is the same vector function as in the functional \eqref{3.3}, $\nu$ is the unit vector of the outward normal to the surface $\Gamma$. The surface integral in \eqref{3.7} is well defined, since $u\in H^1(\Omega, \mathbb{C}^3, \tau)$ and $\mathrm{div}w \in H^1(\Omega, \mathbb{C}^3)$ in view of assumption \eqref{3.6}. Taking this into account and using the embedding theorem for $H^1 (\Omega)$ in $L_2 (\Gamma)$, we obtain the inequality
$$
\left\vert \int\limits_\Gamma \langle u, \nu \mathrm{div}w\rangle d\Gamma \right\vert \le \Vert \mathrm{div}w\Vert_{L_2(\Gamma)}\Vert u_\nu\Vert_{L_2(\Gamma)} 
$$
$$
\le C(S+1)\Vert w\Vert_{H^1(\Omega)}\Vert u\Vert_{H^1(\Omega)}.
$$
Due to this inequality, from \eqref{3.7} it follows that
\be
\label{3.8}
\vert \tilde f (u)\vert \le (1+C(S+1))\Vert w\Vert_{H^1 (\Omega)}\Vert u\Vert_{H^1 (\Omega)}.
\ee
As proved in \cite{Birman}, there exists a continuous linear mapping $\mathscr{H}\to H_0^1 (\Omega, \mathbb{C}^3; \tau)$ and $\mathscr{L}\to H_0^1 (\Omega, \mathbb{C}^3)$ such that
$$
\Vert u\Vert_{H_0^1 (\Omega)} \le C_1 \Vert u\Vert_{\mathscr{H}}\,\,(1\le C_1 <\infty), \,\,\forall u\in \mathscr{H}
$$
and
$$
\Vert w\Vert_{H_0^1 (\Omega)} =\Vert w\Vert_{\mathscr{L}}, \,\, \forall w\in \mathscr{L}.
$$
Moreover, according to \eqref{3.3}
$$
\Vert w\Vert_{\mathscr{L}}=\Vert f\Vert_{\mathscr{L}^*}.
$$
Considering all this, with the help of \eqref{3.8}, we obtain
\be
\label{3.9}
\Vert\tilde f\Vert_{\mathscr{H}^*}=\sup\limits_{u\in \mathscr{H}}\frac{\vert \tilde f(u)\vert}{\Vert u\Vert_{\mathscr{H}}}\le C_1(1+C(S+1))\Vert f\Vert_{\mathscr{L}^*}= \hat C(S)\Vert f\Vert_{\mathscr{L}^*}
\ee
and, thus, the required inequality for $\tilde f$ is established.

Further, according to \eqref{3.7} and \eqref{3.3}, it is obvious that
\be
\label{3.10}
\tilde f(u)=f(u)\,\,{\mathrm{for}}\,\, u\in \mathscr{L}=H_0^1(\Omega,\mathbb{C}^3),
\ee
and for $u\in \mathscr{M}=\{ u=\nabla\varphi: \varphi\in H_0^1(\Omega, \mathbb{C})\cap H^2(\Omega,\mathbb{C})\}$ 
$$
\tilde f(u)=\int\limits_\Omega \left\{ \langle \Delta\varphi, \mathrm{div}w\rangle +\langle \nabla\varphi, w\rangle\right\} dx -\int\limits_\Gamma \langle \frac{\partial\varphi}{\partial\nu}, \mathrm{div}w\rangle d\Gamma =\int\limits_\Omega \langle \varphi, \Delta \mathrm{div}w-\mathrm{div}w\rangle dx
$$
and, thus, according to \eqref{3.5}
\be
\label{3.11}
\tilde f (u)=0\quad {\mathrm{for}}\,\,u\in \mathscr{M}.
\ee
Combining \eqref{3.9}-\eqref{3.10}, we conclude that any functional $f\in F_Q\subset \mathscr{L}^*$ ($Q=\mathscr{L}\cap \mathscr{M}$) can be extended to the functional $\tilde f\in F_{\mathscr{M}}\subset \mathscr{H}^*$. Therefore, according to Theorem \ref{theorem2.1}, the inclination of subspaces $\mathscr{L}$ and $\mathscr{M}$ is less than $1$, i.e., condition 1 of Theorem \ref{theorem1.1} is satisfied.

\begin{remark}
In the comparatively simple proof of this fact presented above, it was assumed that condition \eqref{3.1} is satisfied. Another proof that does not use this assumption is rather cumbersome and is not given here.
\end{remark}

Let us now show that condition 2 of Theorem \ref{theorem1.1} is also satisfied. Let $u\in (\mathscr{L}+\mathscr{M})^\perp\subset \mathscr{H}$. Then $(u,v)_\Omega=0$ $\forall v\in \mathscr{L}=H_0^1 (\Omega, \mathbb{C}^3)$ and $(u,v)_\Omega=0$ $\forall v\in \mathscr{M}=\{ v=\nabla\varphi, \varphi \in H_0^1 (\Omega)\cap H^2 (\Omega)\}$. Using these equalities and assuming that $u\in H_0^1 (\Omega, \mathbb{C}^3;\tau)\cap H^2 (\Omega, \mathbb{C}^3)$, we conclude that $u(x)$ is a solution of the following boundary value problem
\begin{eqnarray*}
\mathrm{rot}\mathrm{rot}u(x)-\nabla\mathrm{div}u(x)+u(x) =0,\,\, x\in\Omega,\cr
\mathrm{div}u(x) =0,\,\,x\in\Gamma,\cr
u_\tau (x) =0,\,\, x\in\Gamma.
\end{eqnarray*}
Hence it follows that
$$
\int\limits_\Omega \left \{ \vert \mathrm{rot}u\vert^2 +\vert \mathrm{div}u\vert^2 +\vert u\vert^2\right\}dx=0,
$$
and, therefore, $u\equiv 0$, i.e. condition 2 of Theorem \ref{theorem1.1} is satisfied.

According to Theorem \ref{theorem1.1}, any vector function from $\mathscr{H}$ can be represented as a sum of two vector functions from subspaces $\mathscr{M}$ and $\mathscr{L}$ with an estimate for the norms of the terms (see \eqref{1}, \eqref{2}). This representation is obviously not unique, if $\mathscr{M}\cap \mathscr{L}\not=\emptyset$.

Let us call vector functions from subspace $\mathscr{M}$ potential fields, and those from the subspace 
$$
\hat{\mathscr{L}}=\mathscr{L}\ominus(\mathscr{M}\cap \mathscr{L})
$$
vortex fields. It is clear from above that any vector function from $\mathscr{H}$  we can represent as a sum of two terms from subspaces $\mathscr{M}$  and $\mathscr{L}$, and such a decomposition is unique and estimates of the form \eqref{2} are valid. 

In conclusion, we show that the Korn-type inequality \cite{Horgan} holds for vortex fields:
\be
\label{3.12}
\Vert u\Vert^2_{H_0^1(\Omega)}\le \frac{1}{1-c^2(\hat {\mathscr{L}},{\mathscr{M}})}(\Vert \mathrm{rot}u\Vert^2_{L_2(\Omega)}+\Vert u\Vert^2_{L_2(\Omega)}).
\ee
First, we note that according to definition \eqref{1.1} and Theorem \ref{theorem2.1}, the inclinations $c(\hat{\mathscr{L}}, \mathscr{M})$ and $c(\mathscr{L}, \mathscr{M})$ of subspaces $(\hat{\mathscr{L}}, \mathscr{M})$ and $(\mathscr{L}, \mathscr{M})$ in $\mathscr{H}$ are equal and are less than $1$
$$
c=c(\hat{\mathscr{L}}, \mathscr{M})=c(\mathscr{L}, \mathscr{M})<1
$$
and 
\be
\label{3.13}
\vert (u,v)_\Omega\vert \le c\Vert u\Vert_{\mathscr{H}} \Vert v\Vert_{\mathscr{H}},
\ee
where $c=c(\hat{\mathscr{L}}, \mathscr{M})$, $\forall u\in \hat{\mathscr{L}}$ and $v\in \mathscr{M}$

Let us denote by $P_{\mathscr{M}}$ the orthogonal projection operator in $\mathscr{H}$ onto the subspace $\mathscr{M}$. Then for $u\in \hat{\mathscr{L}}$
$$
\sup\limits_{v\in \mathscr{M}}\frac{\vert (u,v)_\Omega\vert}{\Vert v\Vert_{\mathscr{H}}}=\Vert P_{\mathscr{M}}u\Vert_{\mathscr{H}},
$$
and, thus, according to \eqref{3.13},
\be
\label{3.14}
\Vert P_{\mathscr{M}}u\Vert^2_{\mathscr{H}}=\int\limits_\Omega \left \{ \vert \mathrm{div} P_{\mathscr{M}}u\vert^2 + \vert P_{\mathscr{M}}u\vert^2\right \}dx \le c^2 \Vert u\Vert^2_{\mathscr{H}}.
\ee
Let us represent $u\in \hat{\mathscr{L}}=\mathscr{L}\ominus (\mathscr{L}\cap \mathscr{M})$ in the form
\be
\label{3.15}
u= P_{\mathscr{M}}u+u^\prime,\quad u^\prime\in \mathscr{H}\ominus (\mathscr{L}\cap \mathscr{M}).
\ee
Evidently, $P_{\mathscr{M}}u^\prime =0$ and, hence, $\forall\varphi\in H_0^1 (\Omega)\cap H^2 (\Omega)$ since $\nabla\varphi\in \mathscr{M}$ the equality $(u^\prime, \nabla\varphi)_\Omega =0$ is true. Using this equality, we conclude that
\be
\label{3.16}
\mathrm{div}u^\prime (x) =0, \,\, x\in \Omega.
\ee
Taking into account \eqref{3.15}, \eqref{3.16}, we can rewrite inequality \eqref{3.14} in the form
$$
\int\limits_\Omega \vert \mathrm{div} u\vert^2 dx +\int\limits_\Omega \vert u-u^\prime\vert^2 dx\le \int\limits_\Omega \{ \vert\mathrm{rot}u\vert^2 + \vert\mathrm{div}u\vert^2 +\vert u\vert^2\} dx,
$$
whence it follows that
$$
(1-c^2)\int\limits_\Omega \vert\mathrm{div}u\vert^2 dx\le c^2 \int\limits_\Omega \vert\mathrm{rot}u\vert^2 dx + c^2 \int\limits_\Omega \vert u\vert^2 dx.
$$
Since $c<1$ we rewrite this inequality in th form
$$
\int\limits_\Omega \{ \vert\mathrm{rot}u\vert^2 + \vert\mathrm{div}u\vert^2 +\vert u\vert^2\} dx\le \frac{1}{1-c^2} (\Vert \mathrm{rot}u\Vert^2_{L_2(\Omega)}+ \Vert u\Vert^2_{L_2(\Omega)})
$$
and, recalling the well-known equality 
$$
\int\limits_\Omega \vert\nabla u\vert^2 dx = \int\limits_\Omega \{ \vert\mathrm{rot}u\vert^2 + \vert\mathrm{div}u\vert^2\}dx\,\,\,\forall u\in H_0^1 (\Omega, \mathbb{C}^3),
$$
we obtain the required inequality \eqref{3.12}.

  \end{document}